\def\date{31 July 2014}
\newtheorem{proposition}{proposition}[section]
\newtheorem{theorem}[proposition]{Theorem}
\newtheorem{claim}[proposition]{Claim}
\newtheorem{lem}[proposition]{Lemma}
\newtheorem{thm}[proposition]{Theorem}
\newcommand{\qed}{$\square$\bigskip}
\newtheorem{definition}[proposition]{Definition}
\begin{document}
\font\smallrm=cmr8

\phantom{a}\vskip .25in
\centerline{{\large \bf  FIVE-LIST-COLORING GRAPHS ON SURFACES I.}}
\smallskip
\centerline{{\large\bf  TWO LISTS OF SIZE TWO IN PLANAR GRAPHS}}
\vskip.4in
\centerline{{\bf Luke Postle}%
\footnote{\texttt{luke@mathcs.emory.edu}.}} 
\smallskip
\centerline{Department of Mathematics and Computer Science}
\centerline{Emory University}
\centerline{Atlanta, Georgia  30323, USA}
\medskip
\centerline{and}

\medskip
\centerline{{\bf Robin Thomas}%
\footnote{\texttt{thomas@math.gatech.edu}. Partially supported by NSF under
Grant No.~DMS-1202640.}}
\smallskip
\centerline{School of Mathematics}
\centerline{Georgia Institute of Technology}
\centerline{Atlanta, Georgia  30332-0160, USA}

\vskip 1in \centerline{\bf ABSTRACT}
\bigskip

{
\parshape=1.0truein 5.5truein
\noindent
Let $G$ be a plane graph with outer cycle $C$, let $v_1,v_2\in V(C)$ and let $(L(v):v\in V(G))$ be 
a family of sets such that $|L(v_1)|=|L(v_2)|=2$, $|L(v)|\ge3$ for every $v\in V(C)\setminus\{v_1,v_2\}$
and  $|L(v)|\ge5$ for every $v\in V(G)\setminus V(C)$.
We prove a conjecture of Hutchinson that $G$ has a (proper) coloring $\phi$ such that
$\phi(v)\in L(v)$ for every $v\in V(G)$.
We will use this as a lemma in subsequent papers.
}

\vfill \baselineskip 11pt \noindent September 2012. Revised \date.
\vfil\eject
\baselineskip 18pt

\section{Introduction}

All graphs in this paper are finite and simple.
A \emph{list-assignment} for a graph $G$ is a family of non-empty sets $L=(L(v):v\in V(G))$.
An \emph{$L$-coloring}
of $G$ is a (proper) coloring $\phi$ such that $\phi(v)\in L(v)$ for all $v\in V(G)$. 
A graph is {\em $L$-colorable} if it has at least one $L$-coloring.
A graph $G$ is \emph{$k$-choosable}, also called \emph{$k$-list-colorable}, 
if $G$ has an $L$-coloring for every list-assignment $L$ for $G$ such that $|L(v)|\ge k$
for every $v\in V(G)$. 
List coloring was introduced and first studied by Vizing~\cite{Viz} and Erd\H{o}s, Rubin and Taylor~\cite{Erdos}.

Clearly every $k$-choosable graph is $k$-colorable, but the converse is false.
One notable example of this is that the Four-Color Theorem does not generalize to list-coloring. Indeed Voigt~\cite{Voigt} constructed a planar graph that is not $4$-choosable. 
On the other hand Thomassen~\cite{ThomPlanar} proved the following remarkable theorem with an outstandingly short proof.

\begin{theorem}[Thomassen]\label{PlanarChoosable}
Every planar graph is $5$-choosable.
\end{theorem}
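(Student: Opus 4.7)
The plan is to prove Thomassen's theorem by strengthening the statement so that induction on $|V(G)|$ becomes possible. Specifically, I would establish the following: \emph{Let $G$ be a plane graph with outer cycle $C$, let $v_1v_2$ be an edge of $C$, and let $L$ be a list-assignment satisfying $|L(v_1)|=|L(v_2)|=1$ with $L(v_1)\ne L(v_2)$, $|L(v)|\ge 3$ for every $v\in V(C)\setminus\{v_1,v_2\}$, and $|L(v)|\ge 5$ for every $v\in V(G)\setminus V(C)$. Then $G$ has an $L$-coloring.} Theorem~\ref{PlanarChoosable} follows by embedding the planar graph, adding a boundary edge if needed, precoloring two adjacent outer-face vertices with distinct colors drawn from their lists, and invoking the strengthened statement.

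The strengthened statement is proved by induction on $|V(G)|$, with a trivial base case. A first reduction is that $G$ may be assumed to be a near-triangulation: adding chords in the interior can only make $L$-colorability harder, and triangulating interior faces does not affect the hypothesis on $C$.

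The induction then splits into two cases. If $C$ has a chord $uv$, then $C\cup\{uv\}$ divides $G$ into two plane subgraphs $G_1$ (containing the precolored edge $v_1v_2$) and $G_2$; apply induction to $G_1$ to obtain an $L$-coloring, then use the now-precolored edge $uv$ as the pair of singleton vertices for a second application of induction to $G_2$. If $C$ has no chord, let $v_3$ be the neighbor of $v_1$ on $C$ other than $v_2$, and let $w$ be the other $C$-neighbor of $v_3$. Since $G$ is a chord-free near-triangulation, the remaining neighbors of $v_3$ form a path $u_1,\dots,u_p$ of interior vertices with $p\ge 1$ (otherwise $v_1w$ would be a chord). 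Using $|L(v_3)|\ge 3$ and $|L(v_1)|=1$, pick two distinct colors $\alpha,\beta\in L(v_3)\setminus L(v_1)$; delete $v_3$ and set $L'(u_i):=L(u_i)\setminus\{\alpha,\beta\}$ for each $i$, leaving lists of size at least $3$ on the new boundary. The new outer cycle of $G-v_3$ replaces $v_3$ by the path $u_1,\dots,u_p$, the precolored edge $v_1v_2$ is preserved, and the inductive hypothesis applies to give an $L'$-coloring $\phi$. To finish, color $v_3$: its already-colored neighbors are $v_1$, $w$, and the $u_i$, and none of $v_1,u_1,\dots,u_p$ received a color in $\{\alpha,\beta\}$, so at least one of $\alpha,\beta$ differs from $\phi(w)$ and may be assigned to $v_3$.

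The main obstacle is choosing the right strengthening. Too weak a hypothesis (e.g., merely planarity with size-$5$ lists) gives no way to reduce to a proper subgraph; too strong a hypothesis (e.g., arbitrary precolorings of the outer face) fails because such precolorings do not always extend. The delicate calibration of two precolored adjacent vertices, size-$3$ lists on the remaining boundary, and size-$5$ lists in the interior is tuned so that both the chord-splitting step and the vertex-deletion step preserve the hypothesis exactly, with just enough slack in the size-$3$ lists on $C$ to absorb the two forbidden colors $\alpha,\beta$ when $u_1,\dots,u_p$ become boundary vertices.
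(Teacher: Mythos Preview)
Your proposal is correct: it is precisely Thomassen's original argument, and the strengthened statement you formulate is exactly Theorem~\ref{Thom} of the paper. The paper does not reprove Theorem~\ref{PlanarChoosable} from scratch but simply attributes it (and Theorem~\ref{Thom}) to Thomassen; it does additionally observe that Theorem~\ref{Thom} can be recovered from the paper's main result, Theorem~\ref{TwoTwos2}, but that alternative route is not needed for your purposes.
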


Actually, Thomassen~\cite{ThomPlanar} proved a stronger theorem.

\begin{theorem}[Thomassen]\label{Thom}

If $G$ is a plane graph with outer cycle $C$, $P=p_1p_2$ is a subpath of $C$ of length one, and $L$ 
is a list assignment for $G$ such that $|L(v)|\ge 5$ for all $v\in V(G)\setminus V(C)$,
 $|L(v)|\ge 3$ for all 
$v\in V(C)\setminus V(P)$, $|L(p_1)|=|L(p_2)|=1$ and $L(p_1)\ne L(p_2)$, then $G$ is $L$-colorable.
\end{theorem}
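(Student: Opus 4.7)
The plan is to prove Theorem~\ref{Thom} by induction on $|V(G)|$. In the base case $|V(G)|=2$, we have $G=P$ and the conclusion follows from $L(p_1)\ne L(p_2)$.

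For the induction step, I would first dispose of the case where $C$ has a chord $xy$. Such a chord splits $G$ into two plane subgraphs $G_1, G_2$, meeting only along the edge $xy$; label so that $P\subseteq G_1$. Apply the induction hypothesis to $G_1$ (with outer cycle formed from one side of $C$ together with $xy$, and with precolored path $P$) to obtain a coloring $\phi_1$. Then consider $G_2$ with list assignment $L'$ equal to $L$ except $L'(x):=\{\phi_1(x)\}$ and $L'(y):=\{\phi_1(y)\}$, and precolored path $xy$; apply induction again. Combining the two colorings along the chord yields an $L$-coloring of $G$.

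Henceforth assume $C$ is chordless. Let $v$ be the neighbor of $p_1$ on $C$ other than $p_2$ (if $V(C)=\{p_1,p_2\}$ the statement is trivial). List the neighbors of $v$ in the cyclic order around $v$ as $v_1=p_1, v_2, \ldots, v_k$, where $v_k$ is $v$'s second neighbor on $C$. By the chord-free assumption, $v_2,\ldots,v_{k-1}$ are all interior vertices and so have $|L(v_i)|\ge 5$. Choose two distinct colors $c_1, c_2 \in L(v)\setminus L(p_1)$, which is possible since $|L(v)|\ge 3$. Now form $G':=G-v$, and define $L'(v_i):=L(v_i)\setminus\{c_1,c_2\}$ for $2\le i\le k-1$, with $L'(w):=L(w)$ otherwise. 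The outer cycle of $G'$ is $C'$, obtained from $C$ by replacing the two edges $p_1v, vv_k$ with the path $v_1v_2\cdots v_k$; the vertices $v_2,\ldots,v_{k-1}$ now lie on $C'$ with modified lists of size at least $3$, while all other list sizes are unchanged. Thus the hypotheses hold for $(G', C', P, L')$, and by induction $G'$ has an $L'$-coloring $\phi'$. Since $\phi'(p_1)\in L(p_1)$ is disjoint from $\{c_1,c_2\}$ and $\phi'(v_i)\in L'(v_i)$ excludes $\{c_1,c_2\}$ for $2\le i\le k-1$, at most one of $c_1,c_2$ equals $\phi'(v_k)$, so we can color $v$ with one of them to extend $\phi'$ to an $L$-coloring of $G$.

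The main obstacle is the bookkeeping around degenerate configurations. Specifically, if $v$ has degree two in $G$, i.e., $k=2$, then deleting $v$ produces a graph whose outer face is bounded by a path rather than a cycle, so $C'$ as defined above is not a cycle. One must handle this case (and closely related cases, e.g.\ $v$ adjacent to $p_2$, or $|V(C)|=3$) by an ad hoc argument, for instance by choosing the deleted vertex differently or by coloring $v$ first and reducing to Theorem~\ref{Thom} on a smaller graph via the chord-splitting argument. Once these cases are dispatched, the induction closes cleanly, because the color-deletion step drops interior lists of size $\ge 5$ to boundary lists of size $\ge 3$, which is exactly the amount the inductive hypothesis requires.
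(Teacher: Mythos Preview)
Your argument is Thomassen's original inductive proof, and it is correct; the degenerate cases you flag (notably $k=2$) are standardly handled by first triangulating the inner faces, after which a chordless outer cycle of length at least four forces every boundary vertex to have an interior neighbor, so $k\ge 3$.

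The paper, however, takes a genuinely different route: it deduces Theorem~\ref{Thom} from Theorem~\ref{TwoTwos2}. In a minimum counterexample the outer cycle is chordless, so each $p_i$ has a unique neighbor $v_i$ on $C\setminus V(P)$. Deleting $\{p_1,p_2\}$ and stripping the precolors from the lists of their neighbors leaves a $2$-connected plane graph $G'$ in which $v_1,v_2$ have lists of size at least two and all other boundary vertices retain lists of size at least three; Theorem~\ref{TwoTwos2} then colors $G'$, and the coloring extends back to $p_1,p_2$. Your proof is self-contained and more elementary, requiring no external lemma. The paper's proof is not meant to be simpler; its purpose is expository, to show that Theorem~\ref{TwoTwos2} is a genuine strengthening of Theorem~\ref{Thom}, which motivates the main result.
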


Hutchinson~\cite{HutchOuterplanar} conjectured the following variation of Theorem~\ref{Thom}, which is the main result of this paper: 

\begin{theorem}\label{TwoTwos2}
If $G$ is a plane graph with outer cycle $C$, $v_1,v_2\in V(C)$ and $L$ is a list assignment for $G$
with $|L(v)|\ge 5$ for all $v\in V(G)\setminus V(C)$, $|L(v)|\ge 3$ for all $v\in V(C)\setminus \{v_1,v_2\}$, 
and $|L(v_1)|=|L(v_2)|=2$, then $G$ is $L$-colorable.
\end{theorem}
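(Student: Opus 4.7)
The plan is to proceed by induction on $|V(G)|$. Standard reductions come first. If $G$ is not 2-connected, split along a cut vertex and apply induction to the pieces (with $v_1, v_2$ distributed appropriately, using Theorem~\ref{Thom} on a piece containing no $v_i$ after the other piece has been colored). By triangulating internal faces we may assume $G$ is a near-triangulation with $C$ induced. If $v_1 = v_2$ or $v_1v_2 \in E(G)$, pick $c_i \in L(v_i)$ with $c_1 \ne c_2$ (possible since $|L(v_i)| = 2$), set $L'(v_i) := \{c_i\}$, and invoke Theorem~\ref{Thom} directly. So we may assume $v_1 \ne v_2$ and $v_1 v_2 \notin E(G)$.

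The second step handles chords of $C$. If $uv$ is a chord of $C$ splitting $G$ into $G_1, G_2$ with outer cycles $C_1, C_2$, the easy subcase has both $v_1, v_2 \in V(C_1)$: apply the induction hypothesis to $G_1$ (which is smaller) and then extend by Theorem~\ref{Thom} to $G_2$ using the now-precolored edge $uv$. The hard subcase has $v_1 \in V(C_1) \setminus \{u,v\}$ and $v_2 \in V(C_2) \setminus \{u,v\}$. A plausible approach is to choose a 2-element subset $L'(u) \subseteq L(u)$, apply induction to $G_1$ with the two 2-list vertices $v_1$ and $u$, obtain a color $c$ for $u$, and then extend to $G_2$; carefully selecting $L'(u)$ (for instance by an exchange argument over the elements of $L(u)$) should guarantee that at least one admissible $c$ extends consistently across the chord.

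In the main case --- $G$ is a near-triangulation, $C$ is chordless, and $v_1, v_2$ are nonadjacent --- the idea is to pick $c \in L(v_1)$, color $v_1 := c$, and apply the induction hypothesis to $G - v_1$. Removing $v_1$ promotes its interior neighbors onto the new outer cycle with lists of size $\ge 4$ (harmless, as we can truncate to 3), and may reduce the lists of its two $C$-neighbors $w, w'$ by one. If we can pick $c \in L(v_1)$ with $c \notin L(w) \cap L(w')$, then at most one of $w, w'$ drops to size $2$, so the reduced instance has at most two 2-list vertices ($v_2$ and possibly one of $w, w'$) and induction applies. The main obstacle is the exceptional case $L(v_1) \subseteq L(w) \cap L(w')$, in which both $w, w'$ are forced to size $2$ for every choice of $c$. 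In this highly constrained configuration, one must drill deeper into the triangulated neighborhood of $v_1$, for instance by examining the interior neighbor of $v_1$ adjacent to $w$, or by attacking $v_2$ symmetrically instead. I expect this exceptional configuration, together with the hard chord subcase, to carry most of the technical weight of the proof, since both require an intricate case analysis to keep the "exactly two 2-lists" hypothesis preserved under the inductive reduction.
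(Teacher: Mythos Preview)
Your outline follows the natural first attempt, and you correctly identify where it stalls: the ``hard chord subcase'' in which a chord separates $v_1$ from $v_2$, and the ``exceptional configuration'' $L(v_1)\subseteq L(w)\cap L(w')$ in the chordless case. These are not mere technicalities; the direct induction you propose does not obviously close either of them, because each reduction produces a third vertex with a two-element list, and the statement with three such vertices is false in general (see the remark preceding Lemma~\ref{WheelUniqueColor}). Your suggested fixes---an ``exchange argument over the elements of $L(u)$'' for the chord case, and ``drilling deeper'' or switching to $v_2$ for the exceptional case---are not fleshed out, and the second is easily defeated by symmetric bad configurations at both $v_1$ and $v_2$.

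The paper resolves both difficulties not by case analysis but by \emph{strengthening the inductive hypothesis} (Theorem~\ref{DemTwo}): one of the two special vertices is replaced by a path $P$ all of whose vertices carry the \emph{same} two-element list $L_0$, and a secondary induction maximizes $|V(P)|$. Your exceptional configuration then dissolves: when $L_0\subseteq L(w)$ and $|L(w)|=3$, one does not color and delete $v_1$, but instead resets $L(w):=L_0$ and absorbs $w$ into the path (Claim~\ref{BadChord}). This forces each end of $P$ to lie on an essential chord, which drives the rest of the argument. The endgame also invokes an external tool absent from your sketch: Thomassen's lemma that for a canvas with a precolored path $p_1p_2p_3$ admitting no $p_2$-fan from $p_1$ to $p_3$, at most one coloring of the path fails to extend (Lemma~\ref{WheelUniqueColor}). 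That is what finally handles the separating-chord situation. Without the path-growing strengthening or something of comparable power, I do not see how your direct approach terminates.
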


Hutchinson~\cite{HutchOuterplanar} proved Theorem~\ref{TwoTwos2} for outerplanar graphs. 
In fact, Theorem~\ref{TwoTwos2}  implies
Theorem~\ref{Thom}, as we now show. 
\bigskip

\noindent
{\bf Proof of Theorem~\ref{Thom}, assuming Theorem~\ref{TwoTwos2}.}
Let us assume for  a contradiction that
 $G,L$ and $P=p_1p_2$ give a counterexample to Theorem~\ref{Thom}, and the triple is chosen 
so that $|V(G)|$ is  minimum  and subject to 
that $|E(G)|$ maximum. 
It follows from the minimality of $G$ that the outer cycle $C$ of $G$ has no chords and that $G$ is $2$-connected. 
Since $C$ has no chords it follows that
for $i=1,2$ the vertex $p_i$ has a unique  neighbor in $C\setminus V(P)$, say $v_i$.
Let $G':=G\setminus V(P)$.
The graph $G'$ is $2$-connected, for otherwise it has a cutvertex, say $v$; but then $v$ is adjacent to $v_1$
by the maximality of $|E(G)|$, and hence $vv_1$ is a chord of $C$, a contradiction.
We deduce that $v_1\ne v_2$, for otherwise we could color $v_1$ using a color $c\not\in L(p_1)\cup L(p_2)$,
delete $v_1$, remove $c$ from the list of neighbors of $v_1$, and extend the coloring of $v_1$ to
an $L$-coloring of $G$ by the minimality of $G$, a contradiction.
Thus $v_1\ne v_2$.
Let $C'$ be the outer cycle of $G'$, and
for $v\in V(G)\setminus V(P)$ let $L'(v)$ be obtained from $L(v)$
by deleting $L(v_i)$ for all $i\in\{1,2\}$ such that $v_i$ is a neighbor of $v$.
Then $|L'(v)|\ge 3$ for all $v\in V(C')\setminus\{v_1,v_2\}$
and $|L'(v_1)|,|L'(v_2)|\ge2$.
By Theorem~\ref{TwoTwos2} applied to
$G', C', v_1,v_2$ and $L'$ the graph $G'$ has an $L'$-coloring.
It follows that $G$ has an $L$-coloring, a contradiction.~\qed

We will use Theorem~\ref{TwoTwos2} in subsequent papers to deduce various extensions of Theorem~\ref{Thom}
for paths $P$ of length greater than two.
In particular, we will prove the following theorem.

\begin{theorem}
\label{thm:pathbound}
If $G$ is a plane graph with outer cycle $C$, $P$ is a subpath of $C$ and $L$ is a list assignment
for $G$ with $|L(v)|\ge 5$ for all $v\in V(G)\setminus V(C)$, and $|L(v)|\ge 3$ for all $v\in V(C)\setminus V(P)$, 
then there exists a subgraph $H$ of $G$  with $|V(H)|=O(|V(P)|)$ such that for every $L$-coloring $\phi$ of $P$, 
either $\phi$ extends to an $L$-coloring of $G$ or $\phi$ does not extend to an $L$-coloring of $H$.
\end{theorem}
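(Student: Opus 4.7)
My plan is to take $H$ to be a subgraph of $G$ containing $P$, chosen so that (i) every $L$-coloring of $P$ that extends to an $L$-coloring of $H$ also extends to one of $G$, and (ii) $H$ is minimum (say, in $|V(H)| + |E(H)|$) subject to (i). Such $H$ exists because $G$ itself satisfies (i), and the reverse implication (that a coloring extending to $G$ restricts to one extending to $H$) is automatic since $H \subseteq G$. All that remains is to show that minimality forces $|V(H)| = O(|V(P)|)$.

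The minimality gives an $L$-criticality condition relative to $P$: for each $v \in V(H) \setminus V(P)$ and each $e \in E(H) \setminus E(P)$ there is an $L$-coloring $\phi$ of $P$ that extends to $H - v$ (respectively $H - e$) but not to $H$. Standard reductions (contracting cutvertices, deleting chords that separate off a trivial region) allow me to assume $H$ is $2$-connected with outer cycle $C_H$ containing $P$, and that $C_H$ has no short chord creating a small separated disc. I then plan to bound $|V(H)|$ by induction on the number of ``layers'' of $H$ away from $P$, the base case being bounding the length of the outer cycle $C_H$ itself.

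For the outer cycle, suppose for contradiction that the arc $Q$ of $C_H$ disjoint from $V(P)$ has length much larger than $|V(P)|$. Fix a coloring $\phi$ of $P$ witnessing non-extendability, and partially extend $\phi$ one step into $H$ along neighbors of $V(P)$. By a pigeonhole argument on the $\leq 5$ possible colors seen by consecutive vertices of $Q$, I can locate two adjacent vertices $v_1, v_2 \in V(Q)$ whose reduced lists $L'(v_1), L'(v_2)$ have size exactly $2$ (after removing colors forbidden by neighbors already colored by the partial extension), while every other vertex of the remaining outer cycle still has reduced list of size $\geq 3$ and every interior vertex has list of size $\geq 5$. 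Applying Theorem~\ref{TwoTwos2} to the residual plane graph with $v_1, v_2$ as the two size-$2$ vertices produces the missing extension, contradicting criticality. Once $|V(C_H)|$ is linear in $|V(P)|$, the interior is handled by applying the same reasoning to the ``second outer cycle'' one step inside, where each iteration loses a controlled amount of list-size.

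The main obstacle will be the two-twos construction: ensuring the two adjacent outer vertices $v_1, v_2$ simultaneously drop to list size exactly $2$ without breaking the $\geq 3$ constraint elsewhere on the outer cycle. This likely requires strengthening the criticality statement to track not only which colorings of $P$ fail but also how they propagate, and may force me to work with a more refined canonical subgraph (e.g.\ defined via a minimum counterexample with respect to an ordered tuple of $(|V(H)|,|E(H)|)$, and using a small number of additional ``pinned'' precoloring choices).
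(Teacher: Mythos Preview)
The paper does not actually prove Theorem~\ref{thm:pathbound}; it is only announced there as a result to appear in subsequent work (see the sentence immediately preceding the theorem and the references to \cite{PosPhD,PosThoHyperb} just after it). So there is no proof in this paper to compare your proposal against.

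That said, your proposal has a genuine gap at the step you yourself flag as the ``main obstacle.'' You want, after a one-step partial extension of $\phi$ near $P$, to locate two adjacent vertices $v_1,v_2$ on the arc $Q=C_H\setminus V(P)$ whose reduced lists have size exactly $2$. But a vertex of $Q$ only loses colors from its list if it is adjacent to a vertex you have already colored, and after one step those colored vertices all lie in $V(P)$ together with its immediate neighbors. Once $|Q|$ is much larger than $|V(P)|$, almost all of $Q$ consists of vertices with \emph{no} colored neighbor, hence with lists still of size $\ge 3$. Your ``pigeonhole on the $\le 5$ colors seen'' therefore has nothing to pigeonhole: far from $P$ nothing is being subtracted, so no pair $v_1,v_2$ with size-$2$ lists is produced, and Theorem~\ref{TwoTwos2} never fires. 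You thus have no mechanism forcing $|V(C_H)|=O(|V(P)|)$.

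The ``layers'' plan for the interior also does not close. Deleting $C_H$ and passing to the next outer cycle drops interior list sizes from $\ge 5$ to $\ge 4$; one more layer and you are at $\ge 3$ everywhere with no $P$-structure left and no further room to iterate. The argument that is actually carried out in the sequels proceeds via a discharging\,/\,isoperimetric analysis of critical canvases (showing that the deficiency contributed by $P$ must pay for every vertex of $H$), with Theorem~\ref{TwoTwos2} entering as one ingredient among several rather than as the engine of a direct pigeonhole contradiction.
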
 

\noindent
We need Theorem~\ref{thm:pathbound} and several similar results to show that graphs on a fixed surface
that are minimally not $5$-list-colorable satisfy  certain isoperimetric inequalities.
Those inequalities, in turn, imply several new and old results about $5$-list-coloring graphs on 
surfaces~\cite{PosPhD,PosThoHyperb}.

\section{Preliminaries}


\begin{definition}[Canvas]

We say that $(G, S, L)$ is a \emph{canvas} if $G$ is a plane graph, $S$ is a subgraph of the boundary of the 
outer face of $G$, and $L$ is a list assignment for the vertices of $G$ such that $|L(v)|\ge 5$ for all $v\in V(G)$ 
not incident with the outer face,
$|L(v)|\ge 3$ for all $v\in V(G)\setminus V(S)$, and there exists a proper $L$-coloring of $S$. 
%
\end{definition}



Thus Theorem~\ref{Thom} can be restated in the following slightly stronger form,
which follows easily from Theorem~\ref{Thom}.

\begin{theorem}
\label{Thom2}
If $(G,P,L)$ is a canvas, where $P$ is a path of length one, then  $G$ is $L$-colorable.
\end{theorem}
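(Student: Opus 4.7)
The plan is to deduce Theorem~\ref{Thom2} from Theorem~\ref{Thom} by restricting $L(p_1)$ and $L(p_2)$ to singletons. Since $(G,P,L)$ is a canvas, the path $P=p_1p_2$ admits an $L$-coloring $\phi_0$, and necessarily $\phi_0(p_1)\neq \phi_0(p_2)$. Define a new list assignment $L'$ by $L'(p_i):=\{\phi_0(p_i)\}$ for $i=1,2$ and $L'(v):=L(v)$ for every other vertex $v$. Any $L'$-coloring of $G$ is automatically an $L$-coloring of $G$, so it suffices to produce an $L'$-coloring.

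Suppose first that $G$ is $2$-connected, so its outer face is bounded by a simple cycle $C$. Since $P$ lies in the boundary of the outer face and has length one, $P$ is a subpath of $C$. The list assignment $L'$ now satisfies the hypotheses of Theorem~\ref{Thom}: $|L'(p_1)|=|L'(p_2)|=1$ with $L'(p_1)\neq L'(p_2)$; $|L'(v)|\geq 3$ for $v\in V(C)\setminus V(P)$; and $|L'(v)|\geq 5$ for $v\in V(G)\setminus V(C)$. Thomassen's theorem then provides an $L'$-coloring of $G$, as required.

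For the general case I would proceed by induction on $|V(G)|$ to reduce to the $2$-connected situation. If $G$ is disconnected, apply the inductive hypothesis to the component containing $P$, and $L$-color each remaining component $H$ separately by picking an edge $uv$ on its outer face, restricting $L(u)$ and $L(v)$ to compatible singletons (possible since both lists have size at least $3$), and invoking Theorem~\ref{Thom} for $(H,uv,L)$ as above. If $G$ is connected but has a cutvertex $w$, write $G=G_1\cup G_2$ with $V(G_1)\cap V(G_2)=\{w\}$ and $P\subseteq G_1$; apply the inductive hypothesis to $G_1$, then extend the coloring across $w$ by the same restriction-plus-Theorem~\ref{Thom} trick (with $w$ precolored and paired with a suitably precolored outer-face neighbor of $w$ in $G_2$).

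The main obstacle is the bookkeeping in the non-$2$-connected reduction: one must verify that each piece of $G$ inherits a canvas structure and that the precolorings introduced at splitting points remain consistent with the original lists. Conceptually, however, every case reduces uniformly to Theorem~\ref{Thom} by restricting two adjacent outer-face lists to singletons, which is why the author describes the strengthening as following easily from Theorem~\ref{Thom}.
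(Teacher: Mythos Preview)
Your proposal is correct and follows exactly the natural route the paper alludes to when it says the result ``follows easily from Theorem~\ref{Thom}'': fix an $L$-coloring of $P$, reduce the lists at $p_1,p_2$ to singletons, and apply Thomassen's theorem block by block. The paper gives no explicit proof, so there is nothing to compare against beyond this.

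One small slip worth tightening: in both the disconnected case and the cutvertex case you write ``invoking Theorem~\ref{Thom}'' for the side pieces $H$ and $G_2$, but Theorem~\ref{Thom} requires an outer \emph{cycle}, which these pieces need not have. What you should invoke there is the inductive hypothesis (on the smaller canvas $(H,uv,L'')$ or $(G_2,wx,L'')$), not Theorem~\ref{Thom} directly; Theorem~\ref{Thom} is only applied at the bottom of the recursion, in the $2$-connected case. Also handle the trivial sub-case where a leftover component is a single vertex (no edge to pick). With those adjustments the argument is complete.
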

 
%
%
%
It should be noted that Theorem~\ref{TwoTwos2} is not true when one allows three vertices with list of size two. 
Indeed, Thomassen~\cite[Theorem 3]{ThomWheels} characterized the canvases $(G,S,L)$ such that $S$ is a path
of length two and some $L$-coloring of $S$ does not extended to an $L$-coloring of $G$.
One of Thomassen's obstructions does not extend even when the three vertices in $S$ are given lists of size two. 
To prove Theorem~\ref{TwoTwos2} we will need the following lemma, a consequence 
of~\cite[Lemma~1]{ThomWheels} and ~\cite[Theorem~3]{ThomWheels}.

\begin{lem} \label{WheelUniqueColor}
Let $(G,P,L)$ be a canvas, where $G$ has outer cycle $C$, $P=p_1p_2p_3$ is a path on three vertices and
$G$ has no path $Q$ with ends $p_1$ and $p_3$ such that every vertex of $Q$ belongs to $C$ and is adjacent to $p_2$.
Then there exists at most one $L$-coloring of $P$ that does not extend to an $L$-coloring of $G$.
\end{lem}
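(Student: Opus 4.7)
The plan is to derive this lemma directly from Thomassen's two cited results, proceeding by contradiction. Assume $\phi$ and $\phi'$ are two distinct $L$-colorings of $P$, neither of which extends to an $L$-coloring of $G$. The key input is \cite[Theorem~3]{ThomWheels}, which classifies every canvas $(G, P, L)$ with $|V(P)| = 3$ in which some $L$-coloring of $P$ fails to extend: up to a short list of cases, such a canvas must contain a ``generalized wheel'' type structure whose hub is $p_2$ and whose rim is a path through $C$ joining $p_1$ to $p_3$, with very tight list constraints along the rim.

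First, I would perform some standard reductions to a minimal counterexample, arguing that $G$ may be assumed $2$-connected with no separating chord of $C$ detaching the interior of $P$ from the rest of the graph: any such chord would split $(G, P, L)$ into two smaller canvases whose combined obstructions could be analyzed separately, so minimality rules out these configurations. In the minimal case every internal face adjacent to $p_2$ is a triangle or the obstruction collapses to a recognizable wheel, which puts us squarely in the setting of \cite[Lemma~1]{ThomWheels}.

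Next, I would apply \cite[Theorem~3]{ThomWheels} separately to $\phi$ and to $\phi'$. Each application produces an explicit obstruction subgraph, together with its rim path from $p_1$ to $p_3$ along $C$. The crucial step is to show that if \emph{two} distinct colorings of $P$ both invoke the classification, then the list-size budget (three on $V(C) \setminus \{p_1, p_2, p_3\}$, five inside, two slack colors from $p_1$ and $p_3$) becomes so tight that the only possible obstruction type forces every vertex of the rim path to be adjacent to $p_2$. Concretely, when a rim vertex $v \neq p_1, p_3$ is not a neighbor of $p_2$ its list of size $\ge 3$ leaves enough freedom to realize at most one bad coloring of $P$, and having two distinct bad colorings eats up this freedom rim-vertex by rim-vertex. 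The resulting rim path is then exactly a path $Q$ from $p_1$ to $p_3$ with every vertex on $C$ and adjacent to $p_2$, contradicting the hypothesis.

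The main obstacle is the case analysis in the final step: Thomassen's classification has several obstruction types, and for each one I would need to verify both that (i) if a second bad coloring exists then all rim vertices are $p_2$-neighbors on $C$, and (ii) the reductions performed at the outset rule out degenerate cases where the rim picks up interior vertices or chords. The combinatorics is routine but fiddly, and the cleanest presentation likely lifts most of this bookkeeping into a direct appeal to \cite[Lemma~1]{ThomWheels}, which already encodes the uniqueness count inside Thomassen's wheel obstructions.
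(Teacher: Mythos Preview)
The paper does not actually supply a proof of this lemma; it simply records it as a consequence of \cite[Lemma~1]{ThomWheels} and \cite[Theorem~3]{ThomWheels} and moves on. Your proposal correctly identifies these two results as the inputs and your overall strategy---assume two distinct non-extending colorings and use Thomassen's classification to force the forbidden path $Q$---is the natural way to unpack that citation.

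Where your sketch diverges from the intended derivation is in the final step. You propose a list-budget argument ``rim-vertex by rim-vertex'' to show that two bad colorings force every rim vertex to be adjacent to $p_2$; this is vaguer than necessary and not obviously correct as stated. The cleaner route, and the one the citation points to, is simply to read off Thomassen's explicit list of obstruction types in \cite[Theorem~3]{ThomWheels}: each obstruction either has its entire outer path from $p_1$ to $p_3$ adjacent to $p_2$ (giving $Q$ directly), or blocks exactly one $L$-coloring of $P$. The uniqueness count for the non-wheel obstructions is precisely what \cite[Lemma~1]{ThomWheels} provides. So no minimal-counterexample reduction and no separate applications of Theorem~3 to $\phi$ and $\phi'$ are needed; a single appeal to the classification, followed by inspection of the cases, suffices.
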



\begin{definition}
Let $(G,S,L)$ be a canvas and let $C$ be the outer walk of $G$. 
We say a cutvertex $v$ of $G$ is \emph{essential} if whenever $G$ can be written as $G=G_1\cup G_2$,
where $V(G_1),V(G_2)\ne V(G)$ and $V(G_1)\cap V(G_2)=\{v\}$,  then $V(S)\not\subseteq V(G_i)$ for $i=1,2$.
Similarly, we say a chord $uv$ of $C$ is \emph{essential} if 
whenever $G$ can be written as $G=G_1\cup G_2$,
where $V(G_1),V(G_2)\ne V(G)$ and $V(G_1)\cap V(G_2)=\{u,v\}$,  then $V(S)\not\subseteq V(G_i)$ for $i=1,2$.
\end{definition}

\begin{definition}
We say that a canvas $(G,S,L)$ is \emph{critical} if there does not an exist an $L$-coloring of $G$ but for every edge 
$e\in E(G)\setminus E(S)$ 
there exists an $L$-coloring of $G\setminus e$.
\end{definition}

\begin{lem}\label{CanvasCritical}
If $(G,S,L)$ is a critical canvas, then
\begin{enumerate}
\item[\rm(1)] every cutvertex of $G$ and every chord of the outer walk of $G$ is essential, and
\item[\rm(2)] every cycle of $G$ of length at most four bounds an open disk containing no vertex of $G$.
\end{enumerate}
\end{lem}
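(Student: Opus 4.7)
The proof proceeds by contradiction in both parts, pitting criticality---which provides an $L$-coloring of $G\setminus e$ for every $e\in E(G)\setminus E(S)$---against the hypothesis that $G$ itself admits no $L$-coloring.

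For part (1), consider first a non-essential cutvertex $v$, so $G=G_1\cup G_2$ with $V(G_1)\cap V(G_2)=\{v\}$, $V(G_i)\ne V(G)$ for $i=1,2$, and, without loss of generality, $V(S)\subseteq V(G_1)$; thus $E(S)\subseteq E(G_1)$ and every edge of $G_2$ is deletable. If $(G_1,S,L)$ has no $L$-coloring, pick any $e\in E(G_2)$: then $G\setminus e\supseteq G_1$ still has no $L$-coloring, contradicting criticality. Otherwise, fix an $L$-coloring $\phi_1$ of $G_1$; the goal is to extend $\phi_1|_{\{v\}}$ to an $L$-coloring of $G_2$. Since $v$ lies on the outer face of $G_2$, attach a new vertex $v'$ in that outer face adjacent to $v$ alone, set $L(v'):=\{c\}$ for some $c\in L(v)\setminus\{\phi_1(v)\}$, and reduce $L(v)$ to $\{\phi_1(v)\}$. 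Theorem~\ref{Thom2} applied to the resulting canvas with precolored path $vv'$ yields the extension, and combining with $\phi_1$ produces an $L$-coloring of $G$---the desired contradiction. The chord case is entirely analogous: for a non-essential chord $uv$, split $G=G_1\cup G_2$ as above (now with $V(G_1)\cap V(G_2)=\{u,v\}$), color $G_1$, and invoke Theorem~\ref{Thom2} directly on the canvas $(G_2,uv,L')$ with $L'(u):=\{\phi_1(u)\}$ and $L'(v):=\{\phi_1(v)\}$.

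For part (2), suppose some cycle $C'$ of $G$ of length at most four bounds an open disk $D$ containing a vertex of $G$. Let $H$ be the subgraph of $G$ consisting of $C'$ together with everything drawn in $D$, and let $G^*:=G-(V(G)\cap D)$, so that $G=H\cup G^*$ and $V(H)\cap V(G^*)=V(C')$. The classical Thomassen extension lemma---a standard consequence of Theorem~\ref{Thom} asserting that any proper $L$-coloring of a cycle of length at most four extends across the enclosed disk provided interior vertices have lists of size at least five---applies to $H$. Therefore any $L$-coloring of $G^*$ would extend to one of $G$; since $G$ has no $L$-coloring, neither does $G^*$. On the other hand, $D$ contains a vertex of $G$, so some edge $e\in E(H)\setminus E(C')$ exists, and $e\notin E(S)$ because $V(S)$ lies on the outer walk of $G$, disjoint from $D$. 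By criticality $G\setminus e$ has an $L$-coloring, whose restriction to $G^*\subseteq G\setminus e$ is an $L$-coloring of $G^*$, a contradiction.

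The principal technical obstacle is establishing the two auxiliary extension results invoked above: the single-vertex precoloring extension used in the cutvertex argument (handled by the pendant augmentation and Theorem~\ref{Thom2}) and Thomassen's precolored-$\leq\!4$-cycle extension used in part (2). Both are standard reductions from Theorem~\ref{Thom}, but the second requires a delicate split on cycle length and careful bookkeeping of the list sizes at neighbors of $V(C')$ after the precoloring is absorbed.
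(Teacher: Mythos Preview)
Your proof is correct and follows essentially the same approach as the paper: for (1), color the side $G_1$ containing $S$ using criticality and then extend across the cut via Theorem~\ref{Thom2}, and for (2), invoke the standard extension lemma for precolored cycles of length at most four (which the paper simply defers to~\cite[Theorem~6]{Bohme}, noting it can also be deduced from Theorem~\ref{Thom2}). Your pendant-vertex augmentation in the cutvertex subcase is a clean way to handle a detail the paper glosses over, since a single precolored vertex is not literally a path of length one as Theorem~\ref{Thom2} requires.
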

\begin{proof}
To prove (1) suppose for a contradiction that the graphs $G_1,G_2$ satisfy the requirements in the 
definition of essential cutvertex or essential chord, except that $V(S)\subseteq V(G_1)$.
Since $(G,S,L)$ is critical, there exists an $L$-coloring $\phi$ of $G_1$. 
By Theorem~\ref{Thom2}, $\phi$ can be extended to $G_2$. Thus $G$ has an $L$-coloring, a contradiction. This proves (1).

Statement (2) is a special case of~\cite[Theorem~6]{Bohme}.
It can also be deduced from Theorem~\ref{Thom2}.~\qed
\end{proof}

\section{Proof of the Two with Lists of Size Two Theorem}

In this section, we prove Theorem~\ref{TwoTwos2} in the following stronger form.
We say that an edge $uv$ {\em separates} vertices $x$ and $y$ if $x$ and $y$ belong to different components of 
$G\setminus\{u,v\}$.

\begin{thm} \label{DemTwo}
Let $(G,S,L)$ be a canvas, where $S$ has two components: a path $P$ and an isolated vertex $u$ with $|L(u)|\ge 2$. 
Assume that if $|V(P)|\ge 2$, then $G$ is $2$-connected,  $u$ is not adjacent to an internal vertex of $P$ and
there does not exist a chord of the outer walk of $G$ with an end in $P$ which separates a  vertex of $P$ from $u$.
Let $L_0$ be a set of size two.
If $L(v)=L_0$ for all $v \in V(P)$, then $G$ has an $L$-coloring, unless $L(u)=L_0$ and 
$V(S)$ induces an odd cycle in $G$.
\end{thm}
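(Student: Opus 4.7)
The plan is induction on $|V(G)|$, via a minimum counterexample argument. Let $(G,S,L)$ be a counterexample of minimum order, with $S = V(P) \cup \{u\}$ and $L(v) = L_0 = \{\alpha,\beta\}$ for every $v \in V(P)$. First I would reduce to the case that $(G,S,L)$ is critical: if some $e \in E(G)\setminus E(S)$ may be deleted so that $(G\setminus e,S,L)$ still satisfies the hypotheses and avoids the exceptional odd-cycle configuration, then by minimality $G\setminus e$, and hence $G$, has an $L$-coloring. Once critical, Lemma \ref{CanvasCritical} yields that every cutvertex of $G$ and every chord of the outer walk is essential, and every cycle of length at most four bounds an open disk with no vertex inside.

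The induction then splits on $|V(P)|$. For $|V(P)| \geq 2$, the hypotheses force $G$ to be $2$-connected with outer cycle $C$. Fixing a color $c \in L_0$ at one endpoint of $P$ forces the alternating proper coloring of all of $P$, so effectively we have two candidate precolorings of $P$ to extend. The plan is, for at least one of these two choices, to find a local structural reduction near $P$ — typically either (a) a short non-$S$ vertex adjacent to two consecutive vertices of $P$, which can be colored and removed, or (b) a chord-free arc of $C$ permitting identification of an endpoint of $P$ with a neighbor — producing a smaller canvas still satisfying the hypotheses. The essentiality of chords and the emptiness of short cycles from Lemma \ref{CanvasCritical} restrict the configurations near $P$ to a manageable list of cases.

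For $|V(P)| = 1$, we are in the Hutchinson two-lists-of-size-two base case: $V(S) = \{v_1,u\}$ with both lists of size two. Here I would analyze the outer walk between $v_1$ and $u$ using criticality, and use Lemma \ref{WheelUniqueColor} to control wheel-like subgraphs around potential separating vertices. The aim is to locate a non-$S$ vertex $x$ on the outer walk whose list contains a color that is safe to fix regardless of its neighbors' constraints; colouring $x$ with this color and applying induction finishes this case. When no such $x$ exists, the structural restrictions from Lemma \ref{CanvasCritical} force the local configuration to be bounded in size and directly analyzable.

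The hardest part, I expect, is tracking the odd-cycle exception through the induction. Any reduction that shrinks $P$ or modifies $L_0$ may produce a smaller canvas $(G',S',L')$ whose $V(S')$ induces an odd cycle with $L'(u)=L_0$, in which case induction cannot be applied directly. One must argue that either (i) the alternative color choice at the reduction step avoids this configuration, or (ii) if both choices produce the obstruction, then $(G,S,L)$ itself is in the exceptional case — a bookkeeping task that uses the full strength of $2$-connectedness, the no-separating-chord hypothesis, and the structure of the forced alternating coloring of $P$. Together with the case analysis in the $|V(P)|=1$ base case, this bookkeeping carries the bulk of the technical weight.
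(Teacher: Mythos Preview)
Your high-level framework (minimum counterexample, criticality, Lemma~\ref{CanvasCritical}) matches the paper, but the actual engine of the induction is missing, and your proposed reductions for $|V(P)|\ge 2$ do not obviously work.

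The paper does \emph{not} treat $|V(P)|\ge 2$ and $|V(P)|=1$ as separate inductive branches with different local tricks. Instead it adds a secondary optimization: among counterexamples with $|V(G)|$ minimum, take $|V(P)|$ \emph{maximum}. This is the key device you are missing. With it, one looks at the two vertices $v_1,v_2\in V(C)\setminus V(P)$ adjacent to the ends of $P$. If some $v_i\ne u$ is not the end of an essential chord, then either $|L(v_i)\setminus L_0|\ge 2$ (delete $P$ and induct with $v_i$ as the new size-two vertex), or $L_0\subseteq L(v_i)$ and $|L(v_i)|=3$, in which case one \emph{grows} $P$ to absorb $v_i$ with list $L_0$ --- contradicting maximality of $|V(P)|$ unless the odd-cycle exception is hit. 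This forces both $v_i$ to carry essential chords, and planarity then pins down $v_1v_2\in E(G)$. Once $v_1v_2$ is a chord, the case $|V(P)|\ge 2$ collapses to $|V(P)|=1$ in one step: replace all of $P$ by a single new vertex adjacent to $v_1,v_2$ with list $L_0$.

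Your suggested reductions for $|V(P)|\ge 2$ --- a common neighbor of two consecutive $P$-vertices, or an identification along a chord-free arc --- are not what the paper does, and neither is forced by criticality alone; there is no reason a vertex adjacent to two consecutive $P$-vertices need exist, and identifications risk creating the separating-chord or internal-adjacency violations you would then have to rule out. Likewise, your $|V(P)|=1$ plan (``locate $x$ with a safe color'') is too vague: the paper's endgame is a specific structural cascade showing $L_0\subseteq L(v_1)=L(v_2)$, $|L(v_i)|=3$, then producing a further essential chord $v_2u_1$, splitting along it, and invoking Lemma~\ref{WheelUniqueColor} on the $P$-side to eliminate at most one bad color at $u_1$ before inducting on the $u$-side. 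Without the $|V(P)|$-maximization trick and the resulting $v_1v_2$ chord, you have no mechanism to reach this configuration.
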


\begin{proof}
Let us assume for a contradiction that $(G,S,L)$ is a counterexample with $|V(G)|$  minimum and subject to that
with $|V(P)|$  maximum. 
Hence $G$ is connected and $(G,S,L)$ is critical. 
Let $C$ be the outer walk of $G$. 
By the first statement of Lemma~\ref{CanvasCritical} all cutvertices of $G$ and all chords of $C$ are essential. 
Thus we have proved:

\begin{claim}
\label{cl:nochord}
There is no chord with an end in $P$.
\end{claim}

\begin{claim}
\label{cl:2conn}
$G$ is $2$-connected.
\end{claim}
\begin{proof}
Suppose there is a cutvertex $v$ of $G$. By assumption then, $|V(P)|=1$. 
Since $v$ is a cutvertex the graph $G$ can be expressed as $G=G_1\cup G_2$, where $V(G_1)\cap V(G_2)=\{v\}$ and
$V(G_1)\setminus V(G_2)$ and $V(G_2)\setminus V(G_1)$ are both non-empty.
As $v$ is an essential cutvertex of $G$, we may suppose without loss of generality that $u\in V(G_2)\setminus V(G_1)$ 
and $V(P)\subseteq V(G_1)\setminus V(G_2)$.

Consider the canvas $(G_1,S_1,L)$, where $S_1=P+v$, the graph obtained from $P$ by
adding $v$ as an isolated vertex.. 
As $|V(G_1)| < |V(G)|$, there exists an $L$-coloring $\phi_1$ of $G_1$. 
Let $L_1=(L_1(x):x\in V(G))$, where $L_1(v)=L(v)\setminus \{\phi_1(v)\}$ and $L_1(x)=L(x)$ for all $x\in V(G_1)\setminus \{v\}$.
Similarly, there exists an $L_1$-coloring $\phi_2$ of $G_1$ by the minimality of $G$. Note that $\phi_1(v)\ne \phi_2(v)$. 
Let $L_2=(L_2(x):x\in V(G_2))$, where 
$L_2(v)=\{\phi_1(v),\phi_2(v)\}$ and $L_2(x)=L(x)$ for all $x\in V(G_2)\setminus \{v\}$, and consider the canvas $(G_2,S_2,L_2)$,
where $S_2$ consists of the isolated vertices $v$ and $u$.
As $|V(G_2)| < |V(G)|$, there exists an $L_2$-coloring $\phi$ of $G_2$. Let $i$ be such that $\phi_i(v)=\phi(v)$. Therefore, $\phi\cup \phi_i$ is an $L$-coloring of $G$, contrary to the fact that $(G,S,L)$ is a counterexample. \qed
\end{proof}

Let $v_1$ and $v_2$ be the two (not necessarily distinct) vertices of $C$ adjacent to a vertex of $P$.
There are at most two such vertices by Claim~\ref{cl:nochord}.

\begin{claim}\label{NotBothU}
$v_1\ne v_2$.
\end{claim}
\begin{proof}
Suppose not; then $v_1=v_2=u$ and $V(S)=V(P)\cup\{u\}$ by Claim~\ref{cl:2conn}.
By Claim~\ref{cl:nochord} the graph $G[V(S)]$ is a cycle, and if it is odd, then $L(u)\setminus L_0\ne\emptyset$
by hypothesis. In either case the graph $G[V(S)]$ has an $L$-coloring $\phi$.
Let $G':=G\setminus V(P)$ and let $L'=(L'(x):x\in V(G'))$ be defined by $L'(x):=L(x)\setminus L_0$ if 
$x$ has a neighbor in $P$ and $L'(x):=L(x)$ otherwise.
By Theorem~\ref{Thom2} the graph $G'$ has an $L'$-coloring $\phi'$ with $\phi'(u)=\phi(u)$, and thus 
$G$ has an $L$-coloring, a contradiction.~\qed
\end{proof}


\begin{claim}\label{BadChord}
For all $i\in\{1,2\}$, if $v_i\ne u$, then $v_i$ is the end of an essential chord of $C$.
\end{claim} 
\begin{proof}
As $v_1$ and $v_2$ are symmetric, it suffices to prove the claim for $v_1$. So suppose $v_1\ne u$ and $v_1$ is not an end of an essential chord of $C$. First suppose that $|L(v_1)\setminus L_0| \ge 2$. Let $G'=G\setminus V(P)$ and let $S'$ consist of the isolated vertices $v_1$ and $u$. Furthermore, let $L'(v_1)$ be a subset of size two of $L(v_1)\setminus L_0$, 
let $L'(x) := L(x)\setminus L_0$ for all
vertices $x\in V(G')\setminus \{v_1,v_2\}$ with a neighbor in $P$ and let $L'(x):=L(x)$ otherwise. 
Note that the canvas $(G', L', S')$ satisfies the hypotheses of Theorem~\ref{DemTwo}. Hence as $|V(G')| < |V(G)|$ and $(G,S,L)$ is a minimum counterexample, it follows that $G'$ has an $L'$-coloring $\phi'$. Since $\phi'$ can be extended to $P$, $G$ has an $L$-coloring, a contradiction.

So we may assume that $L_0\subseteq L(v_1)$ and $|L(v_1)|=3$. Let $P'$ be the path obtained from $P$ by adding $v_1$.
 Let $S'=P'+u$, and let $L'=(L'(x);x\in V(G))$, where
$L'(v_1)=L_0$ and $L'(x)=L(x)$ for all $x\in V(G)\setminus \{v_1\}$. Consider the canvas $(G,S',L')$. As $v_1$ is not the end of an essential chord of $C$ and $(G,S,L)$ was chosen so that $|V(P)|$ was maximized, we find that $G[V(S')]$ is an odd cycle and $L(u)=L_0$. 

Now color $G$ as follows. Let $\phi(v_1)\in L(v_1)\setminus L_0$;
then we can extend $\phi$ to a coloring of $G[V(S')]$. 
Let $L'(v_1)=\{\phi(v_1)\}$, and for $x\in V(G)\setminus V(S')$ let
$L'(x)=L(x)\setminus L_0$ if $x$ has a neighbor in $S$ and let $L'(x)=L(x)$ otherwise. 
By Theorem~\ref{Thom2}, there exists an $L'$-coloring of $G\setminus V(S)$ and hence $\phi$ can be extended to
an $L$-coloring of $G$, a contradiction.~\qed
\end{proof}

By Claim~\ref{NotBothU} we may assume without loss of generality that $v_1\ne u$. By Claim~\ref{BadChord}, $v_1$ is an end of an essential chord of $C$. But this and Claim~\ref{cl:nochord} imply that $v_2\ne u$. By Claim~\ref{BadChord}, $v_2$ is an end of an essential chord of $C$. As $G$ is planar, it follows from Claim~\ref{cl:2conn} that $v_1v_2$ is a chord of $C$. 

\begin{claim}
$|V(P)|=1$
\end{claim}
\begin{proof}
Suppose not. 
Let $G_1,G_2$ be subgraphs of $G$ such that $G=G_1\cup G_2$, $V(G_1)\cap V(G_2)=\{v_1,v_2\}$,
$V(P)\subseteq V(G_1)$ and $u\in V(G_2)$. 
Let $v\not\in V(G)$ be a new vertex and
construct a new graph $G'$ with $V(G')=V(G_2)\cup \{v\}$ and $E(G')=E(G_2)\cup \{vv_1,vv_2\}$.  Let $L(v)=L_0$. 
Consider the canvas $(G',S',L)$, where $S'$ consists of the isolated vertices $v$ and $u$. As $|V(P)|\ge 2$, $|V(G')| < |V(G)|$. 
By the minimality of $(G,S,L)$, there exists an $L$-coloring $\phi$ of $G'$. 
Hence there exists an $L$-coloring $\phi$ of $G_2$, where $\{\phi(v_1),\phi(v_2)\}\ne L_0$. 
We extend $\phi$ to an $L$-coloring of $P\cup G_2$. 
Let  $L'(v_1)=\{\phi(v_1)\}$ and $L'(v_2)=\{\phi(v_2)\}$, and for $x\in V(G_1)\setminus (V(P)\cup\{v_1,v_2\})$
let $L'(x)=L(x)\setminus L_0$ if $x$ has a neighbor in $P$ and let $L'(x)=L(x)$ otherwise. 
By Theorem~\ref{Thom2}, there exists an $L'$-coloring $\phi'$ of $G_1\setminus V(P)$. As $\phi'$ can be extended to $P$, $G$ has an $L$-coloring, a contradiction. \qed
\end{proof}

Let $v$ be such that that $V(P)=\{v\}$. 

\begin{claim}
For $i\in\{1,2\}$, $L_0\subseteq L(v_i)$ and $|L(v_i)|=3$.
\end{claim}
\begin{proof}
By symmetry it suffices to prove the claim for $v_1$. 
If $|L(v_1)|\ge 4$, then let $c\in L_0$. 
If $|L(v_1)|=3$, then we may assume for a contradiction that $L_0\setminus L(v_1)\ne\emptyset$.
In that case  let $c\in L_0\setminus L(v_1)$. 

In either case, let $L'(v_1)=L(v_1)\setminus \{c\}$, $L'(v_2)=L(v_2)\setminus \{c\}$ and $L'(x)=L(x)$ otherwise. 
Consider the canvas $(G',S',L')$, where $G'=G\setminus\{v\}$ and $S'$ consists of the isolated vertices $v_2$ and $u$. 
As $|V(G')|<|V(G)|$, there exists an $L'$-coloring $\phi'$ of $G'$ by the minimality of $(G,S,L)$. 
Now $\phi'$ can be extended to an $L$-coloring of $G$ by letting $\phi'(v)=c$, a contradiction.~\qed
\end{proof}

\begin{claim}
$L(v_1)=L(v_2)$
\end{claim}
\begin{proof}
Suppose not. As $G$ is planar, either $v_1$ is not an end of a chord of $C$ separating $v_2$ from $u$, or $v_2$ is an the end of a chord separating $v_1$ from $u$. Assume without loss of generality that $v_1$ is not in a chord of $C$ separating $v_2$ from $u$. This implies that $v_1$ is not an end of a chord in $C$ other than $v_1v_2$. Let $v'$ be the vertex in $C$ distinct from $v_2$ and $v$ that is adjacent to $v_1$.

Let $c\in L(v_1)\setminus L_0$. Let $G'=G\setminus \{v,v_1\}$, $L'(x)=L(x)\setminus \{c\}$ if $x$ is adjacent to $v_1$ and $L'(x)=L(x)$ otherwise. 
Note that $|L'(v_2)|\ge 3$ as $L(v_1)\ne L(v_2)$ and $L_0\subseteq L(v_1)\cap L(v_2)$. 
Let $S'$ consist of isolated vertices $v'$ and $u$. 
By considering the canvas $(G',S',L')$ we deduce that $G'$ has an $L'$-coloring $\phi'$;
if $u\ne v'$, then it follows by the minimality of $G$, because in that case $|L(u)|,|L(v')|\ge 2$;
and if $u=v'$, then it follows from Theorem~\ref{Thom2}.
As $\phi'$ can be extended to $\{v,v_1\}$, there exists an $L$-coloring of $G$, a contradiction. \qed
\end{proof}



\begin{claim}
One of $v_1,v_2$ is the end of an essential chord of $C$ distinct from $v_1v_2$.
\end{claim}
\begin{proof}
Suppose for a contradiction that there is no such essential chord. 
Let $c\in L(v_1)\setminus L_0 = L(v_2)\setminus L_0$, and let $L_1$ be a set of size two such that
$c\in L_1\subseteq L(v_1)$ and $L_0\ne L_1$.
Let $L_1(v_1)=L_1(v_2)=L_1$ and $L_1(x)=L(x)$ for all $x\in V(G)\setminus \{v,v_1,v_2\}$.
Let $P'$ denote the path with vertex-set $\{v_1,v_2\}$ and
consider the canvas $(G\setminus v, P'+u, L_1)$. Note that $G\setminus v$ is $2$-connected, 
since $G$ is $2$-connected and there are no vertices in the open disk bounded by the triangle  $vv_1v_2$  by the second assertion of Lemma~\ref{CanvasCritical}. Since $P'$ has no internal vertex, the canvas $(G\setminus v, P'+u, L_1)$
 satisfies the hypotheses of Theorem~\ref{DemTwo}. 
As $|V(G')| < |V(G)|$, 
there exists an $L_1$-coloring $\phi'$ of $G\setminus v$. But then $\phi'$ can be extended to an $L$-coloring of $G$, a contradiction. \qed
\end{proof}

Suppose without loss of generality that $v_2$ is the end of an essential chord of $C$ distinct from $v_1v_2$. Choose such a chord $v_2u_1$ such that $u_1$ is closest to $v_1$ measured by the distance in $C\setminus v_2$. Let $G_1$ and $G_2$ be connected subgraphs of $G$ such that $V(G_1)\cap V(G_2)=\{v_2,u_1\}$, $G_1\cup G_2=G$,  $v\in V(G_1)$ and $u\in V(G_2)$. 

We now select an element $c$ as follows.
If $v_1$ is adjacent to $u_1$, then let $c\in L(v_1)\setminus L_0=L(v_2)\setminus L_0$.
Note that in this case $V(G_1)=\{v,v_1,v_2,u_1\}$ by the second assertion of Lemma~\ref{CanvasCritical}.
If $v_1$ is not adjacent to $u_1$, then we consider the canvas $(G_1,P'',L)$, where $P''=vv_2u_1$. 
As $u_1$ is not adjacent to $v_1$, there does not exist a path $Q$ in $G_1$ as in Lemma~\ref{WheelUniqueColor}. By Lemma~\ref{WheelUniqueColor}, there is at most one coloring of $P''$ which does not extend to $G_1$.
If such a coloring exists, then let $c$ be the color of $u_1$ in that coloring; otherwise let $c$ be arbitrary.

Consider the canvas $(G_2,S',L')$, where $S'$ consists of the isolated vertices $u_1$ and $u$, 
$L'(u_1)=L(u_1)\setminus \{c\}$ and $L'(x)=L(x)$ otherwise. 
As $|V(G_2)|<|V(G)|$, there exists an $L'$-coloring $\phi$ of $G_2$ by the minimality of $(G,S,L)$. But then we may extend $\phi$ to $G_1$
by the choice of $c$ to obtain an $L$-coloring of $G$, a contradiction.~\qed 
%
\end{proof}

%

\section*{Acknowledgment}
The result of this paper forms part of the doctoral dissertation~\cite{PosPhD} of the first author,
written under the guidance of the second author.

\baselineskip 11pt
\vfill
\noindent
This material is based upon work supported by the National Science Foundation.
Any opinions, findings, and conclusions or
recommendations expressed in this material are those of the authors and do
not necessarily reflect the views of the National Science Foundation.
\eject

\end{document}